\theoremstyle{plain}
\newtheorem{thm}{Theorem}[section]
\newtheorem{lemma}[thm]{Lemma}
\newtheorem{corollary}[thm]{Corollary}
\newtheorem{proposition}[thm]{Proposition}
\theoremstyle{definition}
\newtheorem{remark}[thm]{Remark}
\newtheorem{notations}[thm]{Notations}
\newtheorem{assumption}[thm]{Assumption}
\numberwithin{equation}{section}
\newcommand{\ml}[2]{\begin{multline}\label{#1}#2 \end{multline}}
\newcommand{\ga}[2]{\begin{gather}\label{#1}#2 \end{gather}}
\newcommand{\sF}{{\mathcal F}}
\newcommand{\sO}{{\mathcal O}}
\newcommand{\sU}{{\mathcal U}}
\newcommand{\sV}{{\mathcal V}}
\newcommand{\sX}{{\mathcal X}}
\newcommand{\C}{{\mathbb C}}
\newcommand{\F}{{\mathbb F}}
\newcommand{\G}{{\mathbb G}}
\newcommand{\N}{{\mathbb N}}
\newcommand{\PP}{{\mathbb P}}
\newcommand{\Q}{{\mathbb Q}}
\newcommand{\T}{{\mathbb T}}
\newcommand{\Z}{{\mathbb Z}}
\title [Cohomological dimension in pro-$p$-towers]{Cohomological dimension in pro-$p$ towers}
\author{H\'el\`ene Esnault } 
\address{Freie Universit\"at Berlin, Arnimallee 3, 14195, Berlin,  Germany}
\email{esnault@math.fu-berlin.de}
\thanks{The   author is supported by  the Einstein program}
\date{ June 13, 2018}
\begin{document}
\begin{abstract}
We give a proof without use of perfectoid geometry of the following vanishing theorem of Scholze:  for $X\subset \mathbb{P}^n$ a projective scheme of dimension $d$ over an algebraically closed characteristic $0$ field, and $X_r$ the inverse image of $X$ via the map which assigns $(x_0^{p^r}: \dots: x_n^{p^r})$  to the homogeneous coordinates $(x_0:\ldots :x_n) $, the induced map $H^i(X, \F_p)\to H^i(X_r, \F_p)$ on \'etale cohomology  dies for $i>d$ and $r$ large.  Our proof holds in characteristic $\ell \neq p$ as well.

\end{abstract}
\maketitle

\section{Introduction}\label{intro}

If $X$ is a  proper scheme of finite type  of dimension $d$ defined over an   algebraically closed field $k$  of characteristic  $p>0$, Artin-Schreier theory implies that the cohomological dimension of 
\'etale cohomology of $X$  with $\F_p$-coefficients is  at most $d$, i.e. $H^i(X, \F_p)=0$ for $i>d$.  If $k$ has characteristic not equal to $p$, the cohomological dimension of \'etale cohomology of $X$  with $\F_p$-coefficients  is $2d$ if $X$ is proper, and, by Artin's vanishing theorem, 
 at most $d$ if $X$ is affine.  However, when $X$ is projective,  Peter Scholze showed that there is a specific tower of  $p$-power  degree covers of $X$ which makes its cohomological dimension at most  $d$ in the limit. 

Let $X\subset \PP^n$ be a projective scheme of dimension $d$. We choose coordinates $(x_0:\ldots: x_n)$ on $\PP^n$. With this choice of coordinates, we define the covers  $$\Phi^n_r: \PP^{n} \rightarrow \PP^{n}, \ (x_0:\ldots:x_n) \mapsto (x_0^{p^r}:\ldots:x_n^{p^r}).$$ We define $X_r$ as the inverse image of $X$ by $\Phi^n_r$.

\begin{thm}[Scholze, \cite{Sch14},  Theorem~17.3] \label{thm:scholze}
If $k$ is an algebraically closed field of characteristic $0$, for $ i>d$, one has 
$$\varinjlim_r H^{i}(X_r, \F_p) = 0 .$$

\end{thm}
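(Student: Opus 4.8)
The strategy is to reduce to a concrete, geometric statement about the maps $\Phi^n_r$ on projective space and to exploit the fact that these maps are finite flat of degree $p^{rn}$, étale over the open torus $(\mathbb{G}_m/\mu_{p^r})^n \cong \mathbb{G}_m^n$, and totally ramified along the coordinate hyperplanes. The key point is that for $i > d = \dim X$, a class in $H^i(X_r,\mathbb{F}_p)$ must be ``supported'' near the coordinate hyperplanes: away from them, on the affine torus part $X_r \cap \mathbb{G}_m^n$, Artin's affine vanishing theorem gives cohomological dimension $\le d$, so the class dies when restricted to the open complement of $\bigcup_j \{x_j = 0\}$. The whole difficulty is to show that the ``boundary contribution'' is killed after composing with a further cover $\Phi^n_s$.

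\medskip

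\textbf{Step 1 (dévissage along the hyperplane stratification).} Stratify $\PP^n$ by the locally closed strata $V_S = \{x_j = 0 \text{ for } j \in S,\ x_j \ne 0 \text{ for } j \notin S\}$ for $S \subsetneq \{0,\dots,n\}$. Each $V_S$ is isomorphic to a torus $\mathbb{G}_m^{n-|S|}$ sitting inside a smaller projective space, and $\Phi^n_r$ respects this stratification, restricting on $V_S$ to (a torus version of) $\Phi^{n-|S|}_r$. Using the excision/Gysin triangles attached to the closed coordinate subspaces, I would set up an induction on $n$ (equivalently on the number of coordinates), the base case $S$ open, $V_S = \mathbb{G}_m^n$, handled by Artin vanishing as above: $X_r \cap \mathbb{G}_m^n$ is affine of dimension $\le d$, so $H^i = 0$ there for $i>d$, and this already holds at finite level without passing to the limit.

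\medskip

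\textbf{Step 2 (the boundary is killed in the limit).} The new phenomenon is that on the smaller projective spaces $\{x_j = 0 : j \in S\} \cong \PP^{n-|S|}$ the induced scheme is $X$ intersected with that linear subspace, which may have dimension as large as $d$ again, so the inductive hypothesis applies there too — but the cohomology of this boundary piece is fed into that of $X_r$ through a long exact sequence, and one must check the connecting maps are compatible with the tower so that the colimit over $r$ of the boundary terms vanishes \emph{and} the colimit of the interior terms vanishes \emph{and} these two vanishings glue. Concretely, I expect to prove: for each stratum closure the restriction map on $H^i$ ($i>d$) becomes zero after passing from level $r$ to level $r+s$ for $s$ large, using the inductive hypothesis in lower coordinate-dimension for the closed strata, and Artin vanishing for the open stratum. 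The colimit over $r$ commutes with the finitely many Gysin sequences, so termwise vanishing in the limit forces vanishing of $\varinjlim_r H^i(X_r,\mathbb{F}_p)$.

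\medskip

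\textbf{Main obstacle.} The crux — and the reason perfectoid methods are natural here — is controlling how a cohomology class propagates across the ramification locus as one climbs the tower: a class on $X_r$ that restricts to zero on the torus part need not come from the boundary at level $r$, only after pulling back to $X_{r+s}$. Making the dévissage \emph{uniform in $r$} so that a single $s$ works simultaneously for all the finitely many strata is the technical heart. I would handle this by proving a quantitative statement — something like ``the image of $H^i(X_r) \to H^i(X_{r+s})$ is already controlled by the boundary strata for $s \ge s_0(X)$'' — by a Mayer–Vietoris / hypercohomology spectral sequence argument over the cover of $X_r$ by the opens $U_j = X_r \setminus \{x_j = 0\}$ (each affine of dimension $\le d$, hence $H^{>d}(U_j,\mathbb{F}_p)=0$): the only surviving $E_1$-terms in degrees $>d$ come from intersections $U_{j_0}\cap\dots\cap U_{j_k}$, which are themselves affine, so $H^i(X_r,\mathbb{F}_p)$ for $i>d$ is a subquotient of $\bigoplus_{|J| = i - d}$-type Čech terms built from cohomology of affines in degree exactly $\le d$; tracking where these come from and showing the transition maps $\Phi^*_s$ annihilate them is then the computation to grind through. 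The pleasant surprise the author advertises is that none of this uses characteristic $0$: the same dévissage works with $\F_\ell$-coefficients, $\ell \ne p$, because Artin vanishing and the Gysin formalism are available there too.
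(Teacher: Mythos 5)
There is a genuine gap, in two places. First, your Step 1 conflates ordinary and compactly supported cohomology. The dévissage along the stratification glues the open torus stratum to the boundary via the excision sequence $\cdots \to H^{i-1}(Z_r)\to H^i_c(U_r)\to H^i(X_r)\to H^i(Z_r)\to\cdots$, so the term you must kill for the open stratum is $H^i_c(U_r)$, not $H^i(U_r)$. Artin vanishing bounds the latter by $d$, but $H^i_c$ of a $d$-dimensional affine can be nonzero in all degrees up to $2d$; so nothing is ``already handled at finite level,'' and proving $\varinjlim_r H^i_c(U_r)=0$ for $i>d$ is the entire content of the theorem, not its base case. Your Čech/Mayer--Vietoris variant has the same problem in dual form: it shows $H^i(X_r)$ for $i>d$ is built from degree-$\le d$ cohomology of affine intersections, but you then defer exactly the point at issue (``showing the transition maps $\Phi_s^*$ annihilate them is the computation to grind through'') without proposing any mechanism for why they would.

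Second, and more fundamentally, no step of your outline uses the fact that the covers have $p$-power degree while the coefficients are $\F_p$ -- yet the statement is false without that matching. Your closing remark that the same dévissage works with $\F_\ell$-coefficients, $\ell\neq p$, is a misreading of the paper (the generalization is to base fields of characteristic $\ell\neq p$, keeping $\F_p$-coefficients) and is in fact a counterexample to your own argument: for $X=\PP^n$ the map $\Phi_r^{n*}$ on $H^{2i}$ is multiplication by $p^{ri}$, an isomorphism on $\F_\ell$-cohomology, so $\varinjlim_r H^{2n}((\PP^n)_r,\F_\ell)=\F_\ell\neq 0$. The missing engine, which is the heart of the paper's proof, is this: fiber $U$ over $\G_m$ by a coordinate projection $q$, run the compactly supported Leray spectral sequence (whose base contributes only in degrees $\le 2$), handle the $E_2^{0,i}$ and $E_2^{1,i-1}$ terms by induction on the fiber dimension, and kill the remaining $E_2^{2,i-2}$ term by Poincar\'e duality on the curve: pullback on $H^2_c$ of a local system along the $\Z/p^r$-cover $\phi^1_r$ is dual to the trace map on $H^0$ of the dual local system, and the trace of a $\Z/p^r$-action on an $\F_p$-vector space of bounded dimension is divisible by $p^{r-s}$ for some fixed $s$, hence zero for $r$ large. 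Some argument of this kind, pitting the $p$-power Galois group against the $\F_p$-coefficients, is indispensable; without it the dévissage cannot close.
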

Scholze obtains the theorem as a corollary of his theory of perfectoid spaces. 
He does not detail the  proof in {\it loc. cit.}, but his argument is documented in \cite{Sch15}.  
By smooth base change, we may assume that $k=\bar \Q_p$. By the comparison theorem \cite[Thm.~IV.2.1]{Sch15}, 
$\varinjlim_r H^r(X_r, \F_p)\otimes \sO_C/p$ is `almost' equal to $H^i(\sX, \sO_{\sX}^+/p)$ where $\sX$ is a perfectoid space he constructs,  associated to $\varprojlim_r X_r$, and $C=\hat {\bar{ \Q}}_p$. By \cite[Thm.~4.5]{Sche92}, the spectral space $\sX$ has cohomological dimension at most the Krull dimension of $X$.

 The aim of this short note is to give an elementary proof, as was asked for over $\C$ in \cite[Section~17]{Sch14}. It turns out that the proof holds in characteristic not equal to $p$ as well.  One obtains the following theorem.
 \begin{thm} \label{thm:main}
If $k$ is an algebraically closed field of characteristic not equal to $p$, for $ i>d$, one has 
$$\varinjlim_r H^{i}(X_r, \F_p) = 0 .$$

\end{thm}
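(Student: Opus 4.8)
The strategy is to reduce, via standard dévissage and Bertini-type arguments, to a situation where the vanishing can be checked by a direct cohomological computation involving the Artin--Schreier–type structure of the maps $\Phi^n_r$. First I would observe that it suffices to treat the case where $X$ is integral of dimension $d$: one runs an induction on $d$, using the excision/Mayer--Vietoris sequences associated to writing $X$ as a union of its irreducible components and their intersections (all of dimension $\le d$, with the intersections of dimension $< d$), and the fact that $\varinjlim_r$ is exact, so it commutes with the long exact sequences. For $d=0$ the statement is trivial. Next, since we only care about cohomology in degrees $i > d$, and since the maps $X_r \to X_{r'}$ are finite, I would like to excise the singular locus: if $U = X \setminus Z$ with $Z = \Sing(X)$ of dimension $< d$, the relative cohomology $H^i_Z(X,\F_p)$ is controlled by lower-dimensional strata, so by induction $\varinjlim_r H^i(X_r,\F_p) \to \varinjlim_r H^i(U_r,\F_p)$ is injective for $i > d$; hence it is enough to kill classes coming from a smooth (or at least regular) open $U$.

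The heart of the matter is then the following: on the smooth locus, choose a generic linear projection or a pencil so as to fibre $U$ (after shrinking) over a smooth affine curve or over $\A^{d}$, compatibly with the $p$-power maps $\Phi$ on the coordinates — this is where the specific choice of homogeneous coordinates in the definition of $\Phi^n_r$ is used. The point is that $\Phi^n_r$ restricted to an affine chart $x_i \neq 0$ is, in the affine coordinates $y_j = x_j/x_i$, literally the map $(y_j) \mapsto (y_j^{p^r})$, i.e. the $p^r$-power map on the torus/affine space. For such Kummer-type covers in characteristic $\ne p$ one has an explicit decomposition of the pushforward: $(\Phi_r)_* \F_p$ on $\G_m^{d}$ decomposes (after passing to a suitable base and using that all $p$-power roots of unity are available, $\operatorname{char} k \ne p$) as a sum of rank-one Kummer sheaves $\mathcal L_\chi$ indexed by characters $\chi$ of order dividing $p^r$, and $H^i$ of a nontrivial Kummer sheaf on $\G_m$ is concentrated in degree $1$ with controlled dimension. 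Feeding this through the Leray spectral sequence for the fibration, together with the affine Artin vanishing on the base, one shows that in the limit over $r$ the classes in $H^i$, $i > d$, must vanish, because the "extra" top-degree cohomology that is present at each finite level ($2d$ in the proper case) is generated by classes that become divisible by increasingly large powers and die in the colimit — concretely, the transition maps $H^i(X_r,\F_p) \to H^i(X_{r+1},\F_p)$ are, on the relevant Kummer pieces, multiplication-by-$p$-type maps, hence zero on $\F_p$-coefficients in the colimit.

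The main obstacle I anticipate is making the fibration/projection argument work globally rather than just on one affine chart, and controlling how the different charts $x_i \neq 0$ glue: the map $\Phi^n_r$ is a genuine $\PP^n$-self-map but its "Kummer" nature is chart-dependent, and a class in $H^i(X_r,\F_p)$ need not be supported in a single chart. One way around this is to argue by induction on $n$: project from a coordinate point, say $(0:\cdots:0:1)$, to get $\PP^{n-1}$, blow up if necessary, and use that $\Phi^n_r$ is equivariant for this projection with fibres that are $\PP^1$'s mapping to $\PP^1$ by the $p^r$-power map; then the Leray spectral sequence for $X_r \to (\text{image in }\PP^{n-1})_r$ reduces the top-degree vanishing to the case of $\PP^{n-1}$ plus the one-dimensional Kummer computation on the $\PP^1$-fibres. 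The bookkeeping of which $R^q$ contribute in which degree, and checking that the $d$-dimensional bound (not $2d$) genuinely emerges in the colimit, will be the delicate part; everything else is formal manipulation of exact sequences and colimits.
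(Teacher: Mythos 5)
Your overall skeleton (reduce to $X$ integral, fibre a dense open over a base compatibly with the $p^r$-power maps, run a Leray spectral sequence, and kill the top-degree contribution by a divisibility-by-$p$ phenomenon) is in the right spirit and matches the shape of the paper's argument, which projects a dense open $U\subset \T\cap X$ to one factor $\G_m$ of the torus and inducts on the dimension of the fibres. But the central mechanism you propose does not work: $(\phi^1_r)_*\F_p$ does \emph{not} decompose as a direct sum of rank-one Kummer sheaves $\sL_\chi$. The Galois group of the cover is $\Z/p^r$ and the coefficients are $\F_p$; since $\F_p^\times$ (and $\F_{p^m}^\times$ for any $m$) has order prime to $p$, the only character $\Z/p^r\to\F_p^\times$ is the trivial one, and the group algebra $\F_p[\Z/p^r]\cong\F_p[t]/(t-1)^{p^r}$ is local, hence not semisimple. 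So the pushforward is an indecomposable iterated extension of trivial sheaves, and no character-by-character analysis is available; the availability of $p$-power roots of unity in $k$ is irrelevant because the obstruction lives in the coefficient field. (This is exactly why the theorem is subtle: with $\Z/\ell$-coefficients, $\ell$ prime to the degree of the cover, the trivial summand splits off and nothing dies.) Consequently the assertion that the transition maps are ``multiplication-by-$p$-type maps on the relevant Kummer pieces'' has no content as stated, and you need a substitute for this step.

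The substitute in the paper is representation-theoretic and is the one genuinely new idea missing from your plan. Fibreing over the curve $\G_m$, only $E_2^{0,i}$, $E_2^{1,i-1}$ and $E_2^{2,i-2}$ of the compactly-supported Leray spectral sequence can contribute; the first two die by induction on the fibre dimension (proper base change for $R^jq_!\F_p$, whose stalks are $H^j_c$ of fibres of dimension $\le d-1$). For the last, Poincar\'e duality on a dense open of $\G_m$ where $R^jq_!\F_p$ is a local system identifies the pullback on $H^2_c$ with the trace map ${\rm Tr}(\phi^1_r)$ on $H^0$ of the dual local system; since that $H^0$ has $\F_p$-dimension bounded independently of $r$, and $p$-power torsion in $GL(N,\F_p)$ has bounded order, for $r\gg 0$ the $\Z/p^r$-action factors through $\Z/p^s$ with $s<r$, so the trace equals $p^{r-s}$ times a finite sum and vanishes in $\F_p$. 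Two smaller points: your claim that $\varinjlim_r H^i(X_r)\to\varinjlim_r H^i(U_r)$ is injective for $U=X\setminus\Sing(X)$ needs care, since $H^i_Z(X_r)$ is not the cohomology of $Z_r$ (the paper instead works with $H^i_c(U_r)$ and the sequence $H^{i-1}(Z_r)\to H^i_c(U_r)\to H^i(X_r)\to H^i(Z_r)$, removing the coordinate hyperplanes rather than the singular locus so that $U$ sits in the torus); and your fallback projection from a coordinate point does not give $\PP^1$-fibres on $X$ itself, only on the ambient $\PP^n$.
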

 
 The ingredients are  constructibility and   base change properties  for relative \'etale cohomology with compact supports, functoriality, and some easy facts of representation theory of a cyclic group of $p$-power order. \\[.5cm]

{\it Acknowledgements:}  We thank Deepam Patel who asked us about Theorem~\ref{thm:scholze}. He arose our interest in the problem and 
we had  interesting discussions. We thank Peter Scholze  for explaining to us his perfectoid proof reproduced in the introduction, and for  his  friendly comments on our proof.  We thank the two referees for their work, their friendly remarks which helped us sharpening our note, and for their warm words.

\section{General reduction} \label{sec:red}
As all cohomologies considered are \'etale cohomology with coefficients in $\F_p$, we drop $\F_p$ from the notation whenever it does not create confusion. \\[.4cm]

As \'etale cohomology  only depends  on the underlying reduced structure, we may assume that $X$ is reduced.

We first observe that  Theorem~\ref{thm:main} is true for all $X$ if and only if it is true for all $X$ which are irreducible.  We argue by induction on the dimension $d$ of $X$ and its number $s$ of components. 
If $X$  has dimension $0$,  its cohomological dimension is $0$ and there is nothing to prove. If $X$ has only one component, there is nothing to prove by assumption.  
If $X$ has   $s \ge 2$ components, then it is the union of  $X_1$ and $ X_2$ where  $X_2$ is irreducible,  is not contained in  $X_1$, and $X_1$ has $(s-1)$ components.  Then $X_1\cap X_2$ had dimension $\le (d-1)$.  The Mayer-Vietoris exact sequence 
\ga{}{ \ldots \to H^{i-1}(X_1\cap X_2) \to H^i(X_1\cup X_2)\to H^i(X_1)\oplus H^i(X_2)\to H^i(X_1\cap X_2)\to  \ldots \notag}
shows that  $H^i(X_1)= H^i(X_2)= H^{i-1}(X_1\cap X_2)=0$ for $i> d$, implies $H^i(X_1\cup X_2)=0$ for $i>d$.   But $H^i(X_2)=0$ for $i>d$ by assumption, $H^i(X_1)=0$ for $i>d$ by induction on the number of components, $H^{i-1}(X_1\cap X_2)=0$ for $i>d$ by induction on $d$.

Let $H_a \subset \PP^n$ denote the hyperplane defined by $x_a = 0$, and $Y_a = H_a \cap X$.
If there is one $a$ such that  the dimension of $ Y_a$ is $d$, then 
$X\cap Y_a = X$
as $X$ is irreducible, and one replaces in the statement and the proof $\PP^n$ by $H_a=\PP^{n-1}$. 
So we may assume that 
$Y = \bigcup Y_a$ is a divisor on $X$. 
\begin{notations} \label{nota} Assuming   $Y = \bigcup Y_a$ is a divisor on $X$, 
 let $U \subset X \setminus Y$ be open and dense, $Z=X\setminus U\supset Y$ be the boundary closed subscheme. We let $U_r$ resp. $Y_r$ resp. $Z_r$ denote the pull-back of $U$, resp.  $Y$, resp. $Z$  along $\Phi^n_r$. 
 \end{notations} Then $U_r = X_r \setminus Z_r$  is open dense, and $Z_r$ is closed in $X_r$ of smaller dimension.  The morphism $\Phi_r^n$ 
restricted to $U_r$  is proper and \'etale, thus   the direct system $ \varinjlim_r H^{i}_{c}(U_r)$ of \'etale cohomology with compact supports and coefficients $\F_p$ is defined. 

From the excision sequence
\ga{}{\ldots  \to H^{i-1}(Z_r)\to H^i_c(U_r)\to H^i(X_r)\to H^i(Z_r)\to \ldots \notag}
and induction on the dimension, 
one deduces that  the theorem is true if and only if   $\varinjlim_r H_c^i(U_r)=0$ for $i>d$. 

On the other hand, for $d=n$ then $X=\PP^n$,   $H^{2i}(\PP^n)=\F_p\cdot [L]$ where $L$ is linear of codimension $i$. Thus 
$
\Phi_1^{n*}[L]=\F_p\cdot p^{i}[L]$ which is equal to $0$ as soon as $i>0$. \\[.1cm]

So throughout the rest of the note, we make the general 
\begin{assumption} \label{ass}
 $X$ is an irreducible, reduced projective variety  of dimension $d$ with $0<d<n$ over an algebraically closed field $k$ of characteristic not equal to $p$.
 \end{assumption}
 With Notations~\ref{nota},  we want to draw the 
conclusion
$$\varinjlim_r H^i_c(U_r)=0 \ {\rm  for \ all } \ i > d. $$

\section{Local systems}
\subsection{Geometry preparation} \label{ss:geom}
We define the torus  $ \T= \PP^n\setminus \cup_{i=0}^n H_i$.  It has coordinates 
$(\frac{x_1}{x_0},\ldots, \frac{x_n}{x_0})$. We denote by $\phi_r^n: \G_m^n=(\Phi_r^n)^{-1}( \G_m^n) \to \G_m^n$ the restriction of $\Phi_r^n$ to the torus.   By analogy with the notation $X_r=(\Phi_r^n)^{-1}(X)$, we write $\phi_r^n: (\G_m^n)_r \to \G_m^n$. We also use the same notation $U_r=(\Phi_r^n)^{-1}(U)$  for the open $U$.  
The projection $q: \T\to \G_m$ to any of the factors has the property that  $q\circ \phi_r^n$ factors through  $\phi_r^1$.  For $U \subset \T\cap X$ open dense,
 there is  a projection $q: U \rightarrow \G_m$  to one of the factors which
is dominant.  As $U$ is irreducible, all the fibers of $q$ have dimension $\le (d-1)$.

The composite map $U_r\xrightarrow{\phi_r^n} U\xrightarrow{q} \G_m$  factors through $\phi_r^1: (\G_m)_r\to \G_m$, defining $q_r: U_r\to (\G_m)_r$. Concretely, if $q(\frac{x_1}{x_0},\ldots, \frac{x_n}{x_0})=\frac{x_1}{x_0}$ (say), then 
$q_r(\frac{x_1}{x_0},\ldots, \frac{x_n}{x_0})=\frac{x_1}{x_0}$.

Thus  one has a
commutative diagram
\ga{1}{ \xymatrix{ U_r \ar[r] \ar[dr]_{q_r}   \ar@/^1pc/[rr]^{\phi_r^n}  & \ar[d]_{q'_r} U'_r \ar[r] & \ar[d]^q U\\
& (\G_m)_r \ar[r]_{\phi^1_r} & \G_m
}\notag}
where $U'_r= U\times_{\G_m, \phi^1_r} (\G_m)_r,  \  q'_r=(\phi_r^1)^*q.$
\subsection{Constructibility} \label{ss:cons}

Recall that  $R^jq_! \F_p$ is constructible, see  \cite[Thm.~5.3.5]{Del73}. As $\phi_r^1$ is proper,  for any $j\in\N$, 
$\phi_r^{1*}(R^jq_! \F_p) $ is constructible as well and one has 
 a morphism
\ga{}{  \phi_r^{1*}(R^jq_! \F_p)\to R^jq_{r !} \F_p \notag}
of constructible sheaves, which for any $i\in \N ,$ induces an $\F_p$-linear map 
\ml{}{ \phi_r^{n*}: H^i_c(\G_m, R^jq_! \F_p) \to H^i_c((\G_m)_r, \phi_r^{1*}(R^jq_! \F_p)) 
\to \\ H^i_c((\G_m)_r, R^jq_{r!} \F_p) =  H^i_c( \G_m, \phi_{r*}^1 R^jq_{r!} \F_p) . \notag}
Here the left map is defined by adjunction.  \\[.1cm]

  We pose the  \\
    {\bf Induction hypothesis on $d'$:} Given  a   subscheme $X\subset \PP^n$ of dimension $d'$ and  a Zariski open subscheme $U\subset \T\cap X$ which is dense in $X$, there is a natural number $r_0$,  such that for all $r\ge r_0$, the map $\phi_r^{n*}: H^i_c(U)\to H^i_c(U_r)$ vanishes for all $i> d'$. \\[.5cm]

    The induction hypothesis is  trivially verified for $d'=0$. In the sequel we assume it is verified for $d'\le d-1$.

\begin{lemma} \label{lem:locsys}
 With the assumption~\ref{ass} on $X$ and ($U,q)$ as in \ref{ss:geom}, 
for $j>d-1$, there is an $ r_1\in \N $ such that  for all $r\ge r_1$ 
\ga{}{ \phi_r^{n*}: R^jq_! \F_p \to  \phi_{r*}^1R^jq_{r !} \F_p \notag}
vanishes. 

\end{lemma}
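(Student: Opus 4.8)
The statement to prove is Lemma~\ref{lem:locsys}: for $j > d-1$, the sheaf map $\phi_r^{n*}\colon R^jq_!\F_p \to \phi^1_{r*}R^jq_{r!}\F_p$ vanishes for $r$ large. Since both sides are constructible sheaves on $\G_m$, it suffices to check vanishing on stalks at geometric points $\bar t$ of $\G_m$. The stalk of $R^jq_!\F_p$ at $\bar t$ is $H^j_c(U_{\bar t},\F_p)$, where $U_{\bar t}=q^{-1}(\bar t)$ is a fibre of dimension $\le d-1$, which is a locally closed subscheme of a projective space that sits inside the torus in the remaining coordinates. The stalk of $\phi^1_{r*}R^jq_{r!}\F_p$ at $\bar t$ is the sum of $H^j_c$ of the fibres of $q_r$ over the points of $(\G_m)_r$ lying above $\bar t$; but $q_r=(\phi^1_r)^*q$ composed with the map $U_r\to U'_r$, and the fibre of $q_r$ over a point $\bar t_r\mapsto\bar t$ is exactly $(U_{\bar t})_r$, the pull-back of the fibre $U_{\bar t}$ under the coordinate-wise $p^r$-power map in the remaining $n-1$ coordinates. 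So on stalks the map $\phi_r^{n*}$ becomes, up to the bookkeeping over the fibre of $\phi^1_r$, the pull-back map $H^j_c(U_{\bar t})\to H^j_c((U_{\bar t})_r)$ for the fibre.

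**The inductive input.** Now $U_{\bar t}$ is a dense open of a subvariety $X_{\bar t}\subset\PP^{n-1}$ of dimension $\le d-1$ sitting in the smaller torus, so the Induction Hypothesis on $d'\le d-1$ applies to it: there is $r_0(\bar t)$ such that $H^j_c(U_{\bar t})\to H^j_c((U_{\bar t})_r)$ vanishes for $j>d-1\ge \dim U_{\bar t}$ and $r\ge r_0(\bar t)$. Actually for $j>d-1$ we have $j>\dim U_{\bar t}$ automatically, so the hypothesis gives vanishing in exactly the range we need. The one subtlety: the bound $r_0$ a priori depends on $\bar t$, but constructibility saves us — the sheaf $R^jq_!\F_p$ is constructible, hence there is a finite stratification of $\G_m$ over which the fibres $U_{\bar t}$ (and $X_{\bar t}$) vary in families, and one can run the induction hypothesis over the generic point of each stratum, then spread out; since there are finitely many strata one takes $r_1$ to be the maximum of the finitely many resulting bounds. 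Equivalently: the source sheaf $R^jq_!\F_p$ is constructible, so it is generated by finitely many sections, and it suffices to kill each of them; for $r$ large enough all are killed simultaneously.

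**Assembling the argument.** The steps, in order: (1) reduce the vanishing of a map of constructible sheaves to vanishing on all stalks; (2) identify the stalk of the source at $\bar t$ with $H^j_c$ of the fibre $U_{\bar t}$, and the stalk of the target with $\bigoplus H^j_c$ over the fibre of $\phi^1_r$ of $H^j_c$ of the corresponding pulled-back fibres $(U_{\bar t})_r$, using proper (or smooth) base change for cohomology with compact supports and the fact that the square relating $U_r\to U'_r\to U$ to $(\G_m)_r\to\G_m$ in diagram (1) of \ref{ss:geom} is Cartesian; (3) observe that under these identifications the map induced by $\phi_r^{n*}$ on each stalk summand is precisely the pull-back $H^j_c(U_{\bar t})\to H^j_c((U_{\bar t})_r)$ for the $p^r$-power cover of the fibre, which lives in a $\PP^{n-1}$; (4) apply the Induction Hypothesis on $d'=\dim U_{\bar t}\le d-1$ to each fibre — legitimate because $j>d-1\ge d'$ — to get fibrewise vanishing for $r$ large; (5) promote "fibrewise for $r$ large" to "globally for $r$ large" using constructibility of $R^jq_!\F_p$ to reduce to finitely many strata/generators, and set $r_1$ to be the maximum of the finitely many bounds.

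**Main obstacle.** The genuinely delicate point is step (5), the uniformity of $r_1$ over $\bar t$: the induction hypothesis gives, for each individual fibre, a bound $r_0$ that could in principle blow up as $\bar t$ varies. One must use constructibility to cut $\G_m$ into finitely many pieces over which the geometry of the fibres is constant enough that a single $r_0$ works on each piece — equivalently, run the induction hypothesis at generic points of strata and use that a map of constructible sheaves vanishing at the generic point of a stratum vanishes on a neighbourhood, then a Noetherian induction on the closed complement. The identification in steps (2)–(3) is conceptually the heart but technically routine once one notes the Cartesian square $U'_r = U\times_{\G_m,\phi^1_r}(\G_m)_r$ and that base change for $Rq_!$ along the proper (indeed finite) map $\phi^1_r$ is an isomorphism, so $\phi^{1*}_r R^jq_!\F_p \xrightarrow{\sim} R^jq'_{r!}\F_p$ and the map in the lemma factors through the further pull-back $R^jq'_{r!}\F_p\to R^jq_{r!}\F_p$ coming from $U_r\to U'_r$, which on fibres is exactly the fibrewise $\phi_r^{(n-1)*}$.
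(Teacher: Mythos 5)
Your proposal is correct and follows essentially the same route as the paper: reduce to stalks via the base-change property of $R^jq_!\F_p$, identify the stalk map with the pull-back $H^j_c(q^{-1}(x))\to H^j_c(q_r^{-1}((\phi_r^1)^{-1}(x)))$ on fibres of dimension $\le d-1$, apply the induction hypothesis fibrewise, and obtain a uniform $r_1$ by treating the geometric generic point first and then the finitely many remaining closed points. The paper's handling of your step (5) is just a concrete instance of your ``spread out from the generic point'' argument, using a Galois-invariant dense open of the form $(\G_m^0)_r$ on which the relevant sheaves are local systems.
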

\begin{proof}
By \cite[Thm.~5.2.8]{Del73}, 
$R^jq_! \F_p$  verifies
  base change with stalks  $(R^jq_! \F_p)_x=H^j_c(q^{-1}(x))$ on  geometric points
  $x\in \G_m$. Thus we can apply the induction hypothesis on the dimension of the fibers $q^{-1}(x)$. 
  As $(\phi_r^n)^{-1} (q^{-1}(x))= q_r^{-1} ((\phi_r^{1 })^{-1}(x))$, 
 it follows that
the map
  \ga{}{ \phi_r^{n*}: H^j_c(q^{-1}(x))\to    H^j_c(q_r^{-1} (\phi_r^{1 })^{-1}(x)) \notag}
  vanishes by induction for $r \ge r(x)$ large enough depending on $x$. 
  Taking $x$ to be the geometric  generic point ${\rm Spec}(\overline{k(\G_m)})$ defines $r=r(x)$. 
  If $\sU\subset (\G_m)_r$  is a dense open on which $R^jq_{r !} \F_p$ is a local system, which is  lying in the smooth locus $\sU^0$ of $\phi_r^1$, then 
$\cap_g g^*\sU$,  for $g$ in the Galois group  $\Z/p^r$ of $\sU^0/\phi_r^1(\sU^0)$,  is Galois invariant and dense in $(\G_m)_r$, thus of the shape $(\G_m^0)_r$ for some dense open $\G_m^0\subset \G_m$. Then for all closed points $x\in \G_m^0$, we may take $r$ constant equal to $r(x)$. 
  We take $r_1$ greater or equal to $r$  and to the finitely many $r(x)$ for $x$ closed in $\G_m \setminus  \G_m^0$.    
  This finishes the proof. 
\end{proof}

\subsection{Representation theory}
With the assumption~\ref{ass} on $X$ and ($U,q)$ as in \ref{ss:geom}, 
we fix some  $j$ and consider a dense open $\G_m^0\subset \G_m$ over which $R^jq_! \F_p$ is a local system.
As $ \G_m\setminus \G_m^0$ is $0$-dimensional,  the excision map 
$H^2_c(\G_m^0,  R^jq_! \F_p)\to  H^2_c(\G_m, R^jq_! \F_p)$ is an isomorphism.

\begin{proposition} \label{prop:H2}
 There is an $r_2\in \N$ such that for all $j\in \N$,  all $r\ge r_2$,
 \ga{}{ \phi_r^{1*}: H^2_c(\G_m, R^jq_! \F_p) \to
 H^2_c((\G_m)_r, R^jq'_{r!} \F_p) \notag}
vanishes.

\end{proposition}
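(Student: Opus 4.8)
The plan is to reduce the vanishing of $\phi_r^{1*}$ on $H^2_c(\G_m, R^jq_!\F_p)$ to a statement about $H^2_c$ of $\G_m$ with coefficients in a local system, and then to analyze the pullback map along the cyclic $p$-power cover $\phi_r^1$ purely in terms of the representation of the cyclic group $\Z/p^r$ on the stalk. First I would replace $\G_m$ by the dense open $\G_m^0$ over which $R^jq_!\F_p$ is a local system, using the excision isomorphism $H^2_c(\G_m^0, R^jq_!\F_p)\xrightarrow{\sim} H^2_c(\G_m, R^jq_!\F_p)$ noted just before the statement (and the analogous statement upstairs on $(\G_m^0)_r$). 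So it suffices to show $\phi_r^{1*}\colon H^2_c(\G_m^0, \sL)\to H^2_c((\G_m^0)_r, (\phi_r^1)^*\sL)$ vanishes for $r$ large, where $\sL = R^jq_!\F_p|_{\G_m^0}$ is a local system of $\F_p$-vector spaces.

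Next I would use Poincaré duality / the identification of $H^2_c$ of a smooth affine curve with coefficients in a local system with coinvariants of the fundamental group (equivalently, $H^2_c(\G_m^0,\sL)$ is dual to $H^0(\G_m^0, \sL^\vee)$, the invariants), so that $H^2_c(\G_m^0,\sL) \cong \sL_{\pi_1}$, the $\pi_1(\G_m^0)$-coinvariants of the stalk, up to a Tate twist which is harmless over an algebraically closed field after fixing an isomorphism $\F_p\cong \F_p(1)$ (or one simply works with the trace map $H^2_c(\G_m^0,\F_p(1))\to\F_p$). Under this identification, the transfer-free functoriality is: $\phi_r^1$ is finite étale Galois with group $\Z/p^r$, so $(\phi_r^1)^*\sL$ on $(\G_m^0)_r$ has the same stalk, and the pullback on $H^2_c$ corresponds to the natural map $\sL_{\pi_1(\G_m^0)} \to \sL_{\pi_1((\G_m^0)_r)}$ induced by the inclusion of fundamental groups, composed with multiplication by the degree coming from the trace/pushforward normalization of the fundamental class — it is here that the factor $p^r$ will appear, exactly as in the model computation $\Phi_1^{n*}[L] = p^i[L]$ already recorded in Section~\ref{sec:red}. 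The cleanest way to see the $p^r$: the fundamental class of $(\G_m^0)_r$ pushes forward to $p^r$ times that of $\G_m^0$ is false in general, but the relevant map $H^2_c(\G_m^0,\sL)\to H^2_c((\G_m^0)_r,\sL)$ followed by the pushforward $(\phi_r^1)_!$ back to $H^2_c(\G_m^0,\sL)$ is multiplication by $\deg \phi_r^1 = p^r$ on the target of a surjection, and combined with the surjectivity of coinvariant maps and the structure of finite $\Z/p^r$-modules this forces the pullback itself to be killed for $r\gg 0$.

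Concretely, the representation-theoretic input I would isolate is: if $M$ is a finite $\F_p[\Z/p^r]$-module and we consider the coinvariants $M_{\Z/p^r}$, then as $r$ grows the composite $M_{\Z/p^{r}} \to M_{\Z/p^{r+1}}$ (for $M$ pulled back along the quotient $\Z/p^{r+1}\twoheadrightarrow \Z/p^r$, i.e. $M$ with trivial action of the kernel) is multiplication by $p$, hence an inductive-limit-zero phenomenon; more precisely, since $\pi_1((\G_m^0)_r)$ is an open subgroup of $\pi_1(\G_m^0)$ of index $p^r$ and $R^jq_!\F_p$ has stalk a fixed finite $\F_p$-vector space $V$, the tower of coinvariants $V_{\pi_1((\G_m^0)_r)}$ with the transition maps in the direct system eventually stabilizes while the \emph{pullback} maps $\phi_r^{1*}$ are multiplication-by-$p^{\ge 1}$ on a group of exponent $p$, hence $0$, uniformly in $j$ because there are only finitely many isomorphism types of stalk occurring (constructibility again) and the rank of $V$ is bounded. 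The main obstacle I anticipate is pinning down precisely where the factor $p^r$ (rather than $1$) enters the pullback on $H^2_c$ with twisted coefficients — i.e. getting the normalization of the trace/fundamental class right so that $\phi_r^{1*}$ genuinely carries the degree — and making the bound on $r_2$ uniform in $j$; both are handled by constructibility of $Rq_!\F_p$ (finitely many $j$, bounded stalks) and by reducing, via excision, to the local-system locus where the clean duality description of $H^2_c$ is available.
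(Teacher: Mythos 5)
Your proposal is essentially the paper's proof: after passing to the local-system locus $\G_m^0$ by excision, the paper identifies $\phi_r^{1*}$ on $H^2_c$ as the dual of the trace map ${\rm Tr}(\phi_r^1)\colon H^0((\G_m^0)_r,\phi_r^{1*}\sV)\to H^0(\G_m^0,\sV)$ for the dual local system $\sV$, and kills that trace because the representation of the covering group $\Z/p^r$ on a fixed bounded-dimensional $\F_p$-space cannot be faithful for $r$ large, so the norm element $\sum_{i}\rho(i)$ is $p^{r-s}$ times a partial sum and hence zero --- which is exactly your coinvariants/stabilization mechanism read through duality. One caution: your intermediate step ``$(\phi_r^1)_!\circ\phi_r^{1*}=\deg=p^r=0$ forces $\phi_r^{1*}=0$'' is a non sequitur on its own (it only places the image of $\phi_r^{1*}$ inside $\ker (\phi_r^1)_!$); what actually closes the argument is your final observation that the monodromy image in $GL(V)$ stabilizes along the tower, after which the transfer is multiplication by the index $p^{\geq 1}$ on an $\F_p$-vector space --- this is precisely the paper's ``non-faithful representation'' step, so make that the load-bearing claim rather than the degree formula.
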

\begin{proof}
On $\G_m^0$, we denote by $\sV$ the  local system of $\F_p$-vector spaces  dual to $R^jq_! \F_p$. By classical duality, the cup-product 
$H^2_c(\G^0_m, \sV^\vee)\times H^0(\G^0_m, \sV)\to H^2_c(\G_m, \F_p)$ is a perfect duality. 
On the other hand, on $(\G_m^0)_r$, base change again implies
$$\phi^{1*}_r R^jq_! \F_p =
R^jq'_{r!}  \F_p, $$
thus $\phi^{1*}_r\sV$ is the local system dual to $R^jq'_{r!}  \F_p$.
Thus
 \ga{}{ \phi_r^{1*}: H^2_c(\G_m, R^jq_! \F_p) \to
 H^2_c((\G_m)_r, R^jq'_{r!} \F_p) \notag}
 is dual to the trace map
  \ga{}{ {\rm Tr}(\phi_r^1): H^0((\G^0_m)_r,  \phi^{1*}_r \sV) \to
 H^0(\G^0_m, \sV) \notag}
from which we show now that it vanishes for $r$ large.  As $ \sV$ is a local system, 
the dimension of $H^0((\G^0_m)_r,\phi^{1*}_r \sV)
$  as an $\F_p$-vector space is bounded above
by the rank of $R^jq_{*} \F_p$, and thus   does not depend on $r$.
  For $N$ a natural number, in $GL(N, \F_p)$ the order of a $p$-power torsion element is bounded by a constant depending on $N$ and $p$. 
Thus for $r$ large, the representation $\rho$ of  the Galois group $\Z/p^r$ of $\phi_r^1$  on
$H^0((\G^0_m)_r, \phi^{1*}_r \sV)$ 
cannot be faithful. Thus  $\rho$ factors as $\bar \rho$  through $\Z/p^s$ for some $s<r$.  This implies that 
for any $v\in H^0((\G^0_m)_r, \phi^{1*}_r \sV)$ 
\ga{}{{\rm Tr}(\phi_r^1)(v)=\sum_{i=0}^{p^r-1}\rho(i)(v)= \sum_{\bar i\in \Z/p^s}  \sum_{j=0}^{p^{r-s}-1} \rho( i+jp^s)(v)= p^{r-s}\big( \sum_{\bar i\in \Z/p^s}  \bar \rho(\bar i)(v)\big)=0. \notag
}
In the formula,  $i \in \Z/p^r$ and maps to $\bar i\in \Z/p^s$. This finishes the proof.
\end{proof}

\begin{corollary} \label{cor:H2}
 With $r_2$ as in Proposition~\ref{prop:H2}, 
 for  all $r\ge r_2$,
 \ga{}{ \phi_r^{n*}: H^2_c(\G_m, R^jq_! \F_p) \to
 H^2_c((\G_m)_r, R^jq_{r!} \F_p) \notag}
vanishes. 

\end{corollary}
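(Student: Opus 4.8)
The plan is to derive the corollary directly from Proposition~\ref{prop:H2} by unwinding the definition of the map $\phi_r^{n*}$ given in~\ref{ss:cons}. Recall that on $H^2_c(\G_m,R^jq_!\F_p)$ the map $\phi_r^{n*}$ was defined in~\ref{ss:cons} as the composite of the adjunction pull-back map $H^2_c(\G_m,R^jq_!\F_p)\to H^2_c\big((\G_m)_r,\phi_r^{1*}R^jq_!\F_p\big)$, attached to the proper morphism $\phi_r^1$, with the map $H^2_c\big((\G_m)_r,\phi_r^{1*}R^jq_!\F_p\big)\to H^2_c\big((\G_m)_r,R^jq_{r!}\F_p\big)$ induced by the canonical morphism of constructible sheaves $\phi_r^{1*}R^jq_!\F_p\to R^jq_{r!}\F_p$ of~\ref{ss:cons}.

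First I would recall, exactly as in the proof of Proposition~\ref{prop:H2}, that base change for $Rq_!$ along the cartesian square defining $U'_r$ (\cite[Thm.~5.2.8]{Del73}) yields the identification $\phi_r^{1*}R^jq_!\F_p=R^jq'_{r!}\F_p$. Under it the first of the two arrows above becomes precisely the map $\phi_r^{1*}\colon H^2_c(\G_m,R^jq_!\F_p)\to H^2_c\big((\G_m)_r,R^jq'_{r!}\F_p\big)$ of Proposition~\ref{prop:H2}, while the canonical morphism $R^jq'_{r!}\F_p\to R^jq_{r!}\F_p$ is the one induced by the finite \'etale morphism $U_r\to U'_r$ over $(\G_m)_r$ from the diagram in~\ref{ss:geom}. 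Thus $\phi_r^{n*}$ factors through the map of Proposition~\ref{prop:H2}; the latter vanishes for all $j$ and all $r\ge r_2$, hence so does $\phi_r^{n*}$, and one keeps the same $r_2$.

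The argument carries no real difficulty: it is formal once the factorization is set up. The one point worth a line of justification is that the ``adjunction'' arrow occurring in the definition of $\phi_r^{n*}$ in~\ref{ss:cons} is indeed the pull-back map $\phi_r^{1*}$ of Proposition~\ref{prop:H2} under the base-change identification $\phi_r^{1*}R^jq_!\F_p=R^jq'_{r!}\F_p$ --- equivalently, that the two factorizations of $U_r\to(\G_m)_r$, through $\phi_r^n$ and through the intermediate cover $U'_r$, are compatible, which is just the commutativity of the diagram in~\ref{ss:geom} together with functoriality of $R^jq_!$ in base change. I expect no obstacle beyond this bookkeeping: the corollary simply records that the already-vanishing first step in the composite defining $\phi_r^{n*}$ controls the entire map.
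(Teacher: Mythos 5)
Your proposal is correct and follows essentially the same route as the paper: factor $\phi_r^{n*}$ through the map $\phi_r^{1*}$ of Proposition~\ref{prop:H2} via the base-change identification $\phi_r^{1*}R^jq_!\F_p = R^jq'_{r!}\F_p$ and the functoriality map $R^jq'_{r!}\F_p\to R^jq_{r!}\F_p$ coming from $U_r\to U'_r$. The extra bookkeeping you supply (checking that the adjunction arrow in \ref{ss:cons} matches the map of the proposition) is exactly the content the paper leaves implicit.
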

\begin{proof} From the factorization $q'_r$ of $q_r$ over $q$, one 
obtains a factorization 
\ml{}{ \phi_r^{n*}: H^2_c(\G_m, R^jq_! \F_p) \xrightarrow{ \phi_r^{1*}   } H^2_c(\G_m,  R^jq'_{r!} \F_p) =
H^2_c(\G_m,  \phi_r^{1*}R^jq_{!} \F_p)\\
\to
 H^2_c((\G_m)_r, R^jq_{r!} \F_p) \notag}
where the first map is the one considered in Proposition~\ref{prop:H2} and the second one comes by functoriality
$R^jq'_{r!} \F_p\to R^jq_{r!}\F_p$.  This finishes the proof. 
\end{proof}

\begin{remark} \label{rmk:ref1}
We could have taken $U$ to be $\T$  in the Induction Hypothesis. In the proof, we have to consider the inverse image by $q$ of some dense open in $\G_m$ which is then a dense open in $\T$.   For this reason we just kept a neutral letter $U$.

\end{remark}

\section{Proof of Theorem~\ref{thm:main}}
We  argue by induction on $d$, starting with $d=0$.  We use the notations of the previous sections,
  make the assumption~\ref{ass} on $X$ and take $(U,q)$ as in \ref{ss:geom}.

We  consider the Leray spectral sequence for $q$ and $H^i_c(U)$ for $i>d$. 
One first has the corner map 
\ga{}{ H^i_c(U)\to E_2^{0i}(q)= H^0_c(\G_m, R^iq_! \F_p). \notag}
As $i>d>d-1$ we apply Lemma~\ref{lem:locsys}. 
Thus there is an $r_1$ 
such that  for all $r\ge r_1$,  the image of $ H^i_c(U)$  in $ H^i_c(U_r)$ lies in a subquotient of 
\ga{}{ E_2^{1,i-1}(q_r) = H^1_c((\G_m)_{r}, R^{i-1}q_{r!} \F_p) .\notag}
As $i-1>d-1$ the same argument shows that there is an $r'_1 \ge r_1$ such that for all $r\ge r'_1$,
the image of  $ H^i_c(U)$ in $H^i_c(U_r)$ lies in the image  of
\ga{}{ E_2^{2,i-2}(q_r)= H^2_c((\G_m)_{r}, R^{i-2}q_{{r} !} \F_p) .\notag}
If $i>d+1$ then $i-2>d-1$ and one applies again the same argument which finishes the proof. Or else one applies the following argument. 
If $i \ge d+1$, then $i-2 \ge d-1$, Corollary~\ref{cor:H2} implies that there is an $r_2\ge r'_1$ such that for all $r\ge r_2$,   the image of $ H^i_c(U)$  in $H^i_c(U_r)$ is $0$. 

As in the whole argument it does not matter whether we start the proof for $U$ or for $U_{r_0}$ for some given  natural number $r_0$, we in fact proved $\varinjlim_r H^i_c(U_r)=0$ for $i>d$.  
This finishes the proof of Theorem~\ref{thm:main}.

\section{Remarks}
\begin{itemize}

\item[1)] If $Z\subset \PP^n$ is any locally closed subscheme, with compactification  $\bar Z  \subset \PP^n$, 
applying again the excision exact sequence 
$$ \ldots \to  H^i_c(Z)\to H^i(\bar Z)\to H^i(\bar Z\setminus Z)\to H^{i+1}_c(Z) \to \ldots $$ 
 one sees that Theorem~\ref{thm:scholze} implies (and in fact is equivalent to) 
$$\varinjlim_r H^i_c(Z_r)=0 \ {\rm  for \ all } \ i > d$$
where $Z_r=(\Phi_r^n)^{-1}(Z)$. 
\item[2)]   Theorem~\ref{thm:main} can be expressed by writing $H^i(X, \F_p)$ as $H^i(\PP^n, \sF)$ where $\sF$ is the constructible sheaf $i_* \F_{p, X}$, where $i: X\hookrightarrow \PP^n$ is the closed embedding,  $\F_{p,X}$ is the constant \'etale sheaf $\F_p$ on $X$, and writing $H^i(X_r, \F_p)$ as $H^i(\PP^n, (\Phi_r^n)^*\sF)$.    More generally, we can take in Theorem~\ref{thm:main} $\sF$ to be any constructible sheaf.
\begin{thm}
If $k$ is an algebraically closed field of characteristic not equal to $p$,  and $\sF$ is a constructible sheaf of $\F_p$-vector spaces on $\PP^n$ with support of dimension $d$, then for $ i>d$, one has 
$$\varinjlim_r H^{i}((\PP^n)_r,  (\Phi_r^n)^*\sF) = 0 .$$

\end{thm}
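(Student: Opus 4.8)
The plan is to combine a dévissage with the argument of Sections~\ref{sec:red}--4, which — since it already manipulates the non-constant constructible sheaves $R^bq_!\F_p$ and the local system $\sV$ of Proposition~\ref{prop:H2} — is in fact an argument about lisse sheaves all along. I would argue by induction on $d$, proving the statement for every constructible $\F_p$-sheaf whose support has dimension $\le d$ (recall $(\PP^n)_r=\PP^n$); the case $d\le 0$ is immediate, since then $H^i(\PP^n,(\Phi^n_r)^*\sF)$ is the cohomology of a finite set for $i>0$. So assume the statement known in dimensions $<d$.

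\emph{Step 1: dévissage to a lisse sheaf on a variety.} First I would reduce $\sF$, with support $S$ of dimension $d$, to the case $\sF=j_!\sL$ where $j\colon V\hookrightarrow\PP^n$ is a locally closed immersion of an irreducible variety $V$ of dimension $d$ and $\sL$ is a lisse sheaf of $\F_p$-vector spaces on $V$. The reduction to irreducible $S$ is the Mayer--Vietoris argument of Section~\ref{sec:red} verbatim, now with sheaf coefficients, so take $S$ irreducible. Choose a dense open $j\colon V\hookrightarrow S_{\mathrm{red}}$ on which $\sF$ is lisse, put $\sL=j^*\sF$, and let $i\colon T\hookrightarrow\PP^n$ be the closed complement, $\dim T<d$. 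One has the exact sequence $0\to j_!\sL\to\sF\to i_*i^*\sF\to 0$; since $\Phi^n_r$ is finite, $(\Phi^n_r)^*$ is exact and commutes with $j_!$, with $i_*$, and with $i^*$ (base change along $\Phi^n_r$), giving for each $r$ an exact sequence $0\to j_{r!}\psi_r^*\sL\to(\Phi^n_r)^*\sF\to i_{r*}(\Phi^n_r|_{T_r})^*i^*\sF\to 0$, where $\psi_r=\Phi^n_r|_{V_r}$. Applying $H^\bullet(\PP^n,-)$ and then the exact functor $\varinjlim_r$, the induction hypothesis kills $\varinjlim_r H^i$ of the third term for $i>d-1$, and identifying $H^i(\PP^n,j_{r!}\psi_r^*\sL)$ with $H^i_c(V_r,\psi_r^*\sL)$ — by factoring $j$ through the closed immersion of the projective closure $\bar V$, which is proper — reduces us to proving $\varinjlim_r H^i_c(V_r,\psi_r^*\sL)=0$ for $i>d$.

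\emph{Step 2: geometric reduction and the main argument.} Now I would run the reductions of Section~\ref{sec:red} for the pair $(V,\sL)$. If $\bar V\subset H_a$ for some $a$, replace $\PP^n$ by $H_a\cong\PP^{n-1}$ and induct on $n$; otherwise $Y=\bar V\cap\bigcup_aH_a$ is a divisor, $V'=V\setminus Y$ is a dense open of $V$ lying in the torus $\T$, and $W=V\cap Y$ is closed in $V$ with $\dim W<d$. Excision $\cdots\to H^i_c(V'_r,\sL)\to H^i_c(V_r,\sL)\to H^i_c(W_r,\sL)\to\cdots$ and the induction hypothesis (applied to $\sL|_W$ pushed forward to $\PP^n$) reduce us to $\varinjlim_r H^i_c(V'_r,\psi_r^*\sL)=0$ for $i>d$, with $V'\subset\T$ of dimension $d$. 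Pick a dominant projection $q\colon V'\to\G_m$ to one of the torus factors as in~\ref{ss:geom}; its fibers have dimension $\le d-1$ and $\sL$ is lisse on each. The sheaves $R^bq_!\sL$ are constructible with stalks $H^b_c(q^{-1}(x),\sL)$ by \cite[Thm.~5.2.8, Thm.~5.3.5]{Del73}, so the proof of Lemma~\ref{lem:locsys} carries over word for word — using the induction hypothesis on the fibers and the Galois-averaging over a common dense open of $(\G_m)_r$ — to show $R^bq_!\sL\to\phi^1_{r*}R^bq_{r!}\psi_r^*\sL$ vanishes for $b>d-1$, $r\gg0$. Likewise, over a dense open $\G_m^0$ on which $R^bq_!\sL$ is lisse, Poincar\'e duality for curves with lisse $\F_p$-coefficients (the Tate twist being trivial as $\mu_p\cong\F_p$ over $k$) identifies $\phi_r^{1*}$ on $H^2_c$ with the dual of a trace map on the $H^0$'s of a lisse sheaf, whose $\F_p$-dimension is bounded by the rank independently of $r$; the representation of $\Z/p^r$ is therefore non-faithful for $r\gg0$, and the computation of Proposition~\ref{prop:H2} produces a factor $p^{r-s}=0$. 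Feeding the analogues of Lemma~\ref{lem:locsys}, Proposition~\ref{prop:H2} and Corollary~\ref{cor:H2} into the Leray spectral sequence $E_2^{a,b}=H^a_c(\G_m,R^bq_!\sL)$ for $q$ (resp.\ $q_r$) exactly as in the proof of Theorem~\ref{thm:main}: for $i>d$ the image of $H^i_c(V',\sL)$ in $H^i_c(V'_r,\psi_r^*\sL)$ is pushed first into a subquotient of $E_2^{1,i-1}(q_r)$, then of $E_2^{2,i-2}(q_r)$, and finally annihilated by the Corollary~\ref{cor:H2}-analogue (here $i>d-1$, $i-1>d-1$, $i-2\ge d-1$). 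This gives $\varinjlim_r H^i_c(V'_r,\psi_r^*\sL)=0$ for $i>d$, and unwinding Steps~1--2 completes the induction.

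\emph{The main obstacle} I anticipate is bookkeeping rather than conceptual: one must check carefully that $(\Phi^n_r)^*$ commutes with the functors $j_!,i_*,i^*$ used in the dévissage and that the long exact sequences produced are compatible as $r$ varies, so that $\varinjlim_r$ may be applied term by term; and one must confirm that the two external inputs — constructibility and base change for $Rq_!$ with \emph{lisse} (not merely constant) coefficients, and Poincar\'e duality on curves with $\F_p$-coefficients — are available in this generality, which they are. Everything else, including the elementary fact that a $p$-power-torsion element of $GL(N,\F_p)$ has order bounded in terms of $N$ and $p$, is used exactly as in the body of the paper.
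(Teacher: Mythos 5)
Your proposal is correct and follows exactly the route the paper intends when it says ``the proof is exactly the same'': one d\'evissage (Mayer--Vietoris on the components of the support, the sequence $0\to j_!j^*\sF\to\sF\to i_*i^*\sF\to 0$, and passage to a dense open where $\sF$ is lisse, all compatible with $(\Phi^n_r)^*$ since $\Phi^n_r$ is finite) reduces to $H^i_c$ of a lisse sheaf on an open of $\T\cap S$, after which Lemma~\ref{lem:locsys}, Proposition~\ref{prop:H2}, Corollary~\ref{cor:H2} and the Leray argument go through unchanged because they never used constancy of the coefficients. You have in effect supplied the details the paper declines to write, and I see no gap.
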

The proof is exactly the same and we do not write the details. 
\item[3)] 
It may happen that even if $X$ is smooth, one needs $r\ge 2$ in Theorem~\ref{thm:main}. 
For example if  $p=2$, and $X$ is a smooth conic in $\PP^2$ such that the $H_i, \ i=0,1,2$ are tangent to $X$, then $X_1$ splits entirely into the union of four lines.
So the minimum $r$ which kills the whole cohomology $H^i(X), i>d$ is perhaps an intriguing geometric  invariant 
of the triple $$\big(X, \PP^n, (x_0:\ldots:x_n)\big).$$
\item[4)] Beilinson remarked that  Theorem~\ref{thm:main} has some relation to  \cite[Thm.~4.5.1]{BBDG82}. In \cite{BBDG82}, {\it loc. cit.} the formulation is with $\bar \Q_p$-coefficients but they immediately go down to $\F_p$ in the proof. They do not prove vanishing and they are not in $\PP^n$, but they bound the growth of the dimension of the  cohomology in  \'etale towers in function of the degree of the covers. 
\end{itemize}


\begin{thebibliography}{DK09-2}
\bibitem[BBD82]{BBDG82} Beilinson, A., Bernstein, J., Deligne, P.: {\it Faisceaux pervers}, Ast\'erisque {\bf 100} (1982).  
\bibitem[Del73]{Del73} Deligne, P.: {\it Cohomologie \`a supports propres}, in SGA4 Expos\'e XVII,  in Lecture Notes in Math. {\bf 305}  (1973), Springer Verlag. 
\bibitem[Sche92]{Sche92} Scheiderer, C.: {\it Quasi-augmented simplicial
spaces, with an application to
cohomological dimension}, J. Pure Appl. Algebra 
 {\bf 81} (1992), 293--311. 
\bibitem[Sch14]{Sch14} Scholze, P.: {\it Perfectoid spaces and their Applications},
Proceedings of the International Congress of Mathematicians-Seoul 2014  {\bf  II}, 461--486, Kyung Moon Sa, Seoul, 2014.
\bibitem[Sch15]{Sch15} Scholze, P.: {\it On torsion in the cohomology of locally symmetric varieties}, Ann. of Maths.  {\bf 182}  no 3 (2015), 945--1066. 
\end{thebibliography}
\end{document}